\newtheorem{thm}{Theorem}[section]
\newtheorem{lem}[thm]{Lemma}
\newtheorem{prop}[thm]{Proposition}
\theoremstyle{definition}
\newtheorem{defn}[thm]{Definition}
\theoremstyle{remark}
\newtheorem{rem}[thm]{Remark}
\newcommand{\set}[1]{\left\{#1\right\}}
\newcommand{\To}{\longrightarrow}
\newcommand{\Li}{\mathcal{L}}
\newcommand{\ML}{\mathcal{M}\mathcal{L}}
\newcommand{\F}{\mathcal{F}}
\newcommand{\pr}{{}^{\prime}}
\newcommand{\ts}{\tau_s}
\newcommand{\tw}{\tau_w}
\newcommand{\Lp}{\Li\pr}
\newcommand{\Cs}{C_{\sim}^b}
\newcommand{\CR}{\mathcal{C}\mathcal{R}}
\newcommand{\emset}{\emptyset}
\newcommand{\Pri}{\textrm{Prim}}
\begin{document}
\baselineskip=18pt
\title{Quotient spaces determined by algebras of continuous functions}
\author{Aldo J. Lazar}
\address{School of Mathematical Sciences\\
         Tel Aviv University\\
         Tel Aviv 69778, Israel}
\email{aldo@post.tau.ac.il}

\thanks{}%
\subjclass{Primary: 54B15; Secondary: 54B20, 54D45, 46L05}%
\keywords{locally compact space, quotient space, Fell's topology}

\date{}%
\begin{abstract}

   We prove that if $X$ is a locally compact $\sigma$-compact space then on its quotient, $\gamma(X)$ say, determined by the algebra
   of all real valued bounded continuous functions on $X$, the quotient topology and the completely regular topology defined
   by this algebra are equal. It follows from this that if $X$ is second countable locally compact then $\gamma(X)$
   is second countable locally compact Hausdorff if and only if it is first countable. The interest in these results
   originated in \cite{A} and \cite{EW98} where the primitive ideal space of a $C^*$-algebra was considered.

\end{abstract}
\maketitle
\section{Introduction} \label{S:Intro}

The primitive ideal space, $\Pri (A)$, of a $C^*$-algebra $A$ with its hull-kernel topology has some pleasant
properties: it is a locally compact Baire space, see \cite[Corollary 3.3.8 and Corollary 3.4.13]{Di}. However, no
better separation property than $T_0$ can be expected in general. The absence of the Hausdorff separation property
in $\Pri (A)$ justified a study in \cite{L} of the collection of all the closed limit sets of a topological space.
It should be added that the closed limit subsets of $\Pri (A)$ correspond, by one of the bijections detailed in
\cite[Proposition 3.2.1]{Di}, to some distinguished ideals of $A$. An investigation of the topologies on the class
of these ideals that mirror topologies on the collection of closed limit sets of $\Pri (A)$ was performed in
\cite{A}. We intend here to pursue the study begun in \cite{AS} of the topologies on the quotient of $\Pri (A)$
determined by the algebra of the bounded scalar functions. We adopted a purely topological setting so no knowledge
of the theory of $C^*$-algebras is needed for reading this paper. The method we chose is to substitute for the
quotient of a possibly non-Hausdorff space $X$, via a homeomorphism, a quotient of a certain Hausdorff hyperspace of
$X$.

In the following $X$ denotes a locally compact space that is, $X$ is a space in which every point has a
neighbourhood base of compact sets. The algebra of all complex valued bounded continuous functions on $X$, denoted
$C^b(X)$, induces an equivalence relation on $X$: $x_1\sim x_2$ if $f(x_1) = f(x_2)$ for every $f\in C^b(X)$. We let
$\gamma(X)$ be the quotient space of this equivalence relation; the quotient map $q: X\To \gamma(X)$ was called in
\cite[section III.3]{DH} the complete regularization of $X$ and this construction was discussed in \cite{AS} for the
special case of the primitive ideal space of a $C^*$-algebra. As in \cite{AS}, we endow $\gamma(X)$ with the
topology $\tau_{cr}$, the weak topology defined by the bounded continuous functions on $X$ viewed as functions on
the quotient. Another natural topology on $\gamma(X)$ is the quotient topology $\tau_q$. Always $\tau_{cr}\subset
\tau_q$; it was shown in \cite{AS} that if $X$ is compact or if $q$ is open for $\tau_q$ or $\tau_{cr}$ then $\tau_q
= \tau_{cr}$. The question whether these topologies are equal for any locally compact space was left open there. Of
course, $C^b(\gamma(X))$ is always the same for both topologies. We shall prove in Section \ref{S:equi} that if $X$
is $\sigma$-compact then $\tau_{cr} = \tau_q$. Recently D. W. B. Somerset found an example of a locally compact
space $X$ for which these two topologies are different. The example appears in an appendix to this paper and only
its last paragraph, where it is shown that the topological space constructed there is homeomorphic to the primitive
ideal space of a $C^*$-algebra, needs a minimum knowledge of operator algebra theory.

In Section \ref{S:CR} we discuss the class of second countable locally compact spaces $X$ for which $\gamma(X)$ is a
second countable locally compact Hausdorff space. Such spaces when serving as primitive ideal spaces of
$C^*$-algebras were considered in \cite{EW98} and \cite{EW01}. We give a characterization of these spaces by using
the tools developed in Section \ref{S:equi}.

The family of all closed subsets of $X$ will be denoted by $\F(X)$ and its subfamily that consists of all the
nonempty closed subsets of $X$ will be denoted $\F \pr(X)$. We shall equip $\F(X)$ with two topologies: the Fell
topology, denoted here $\ts$, that was defined in \cite{F}, and the lower semifinite topology of Michael, which we
denote $\tw$, see \cite{M}. A base for $\ts$ consists of the family of all the sets
\[
 \mathcal{U}(C,\Phi) := \{S\in \F(X) \mid S\cap C = \emset, S\cap O\neq \emset, O\in \Phi\}
\]
where $C$ is a compact subset of $X$ and $\Phi$ is a finite family of open subsets of $X$. The hyperspace $(\F(X),
\ts)$ is always compact Hausdorff (\cite[Lemma 1 and Theorem 1]{F}). If $X$ is second countable then this hyperspace
is metrizable (\cite[Lemme 2]{D}). The family of all the sets $\mathcal{U}(\emset,\Phi)$ is a base for $\tw$. The
map $\eta_X$ given by $\eta_X(x) := \overline{\{x\}}$ from $X$ to $\F(X)$ is $\tw$-continuous; it is not
$\ts$-continuous in general. It is one-to-one exactly when $X$ is a $T_0$ space.

A subset $S$ of $X$ is called a limit subset if there is a net in $X$ that converges to all the points of $S$. The
family of all the closed limit subsets of $X$ will be denoted $\Li (X)$; it is a compact Hausdorff space with its
relative $\ts$-topology, metrizable if $X$ is second countable, see \cite[Th\'{e}or\`{e}me 12 and Lemme 2]{D}. We
put $\Lp(X) := \Li(X)\setminus \{\emptyset\}$. Then $(\Lp(X),\ts)$ is a locally compact Hausdorff space; if $X$ is
compact then one easily sees that $\emset$ is an isolated point of $(\Li(X),\ts)$ hence $(\Lp(X),\ts)$ is compact.
The family of all (closed) maximal limit subsets is denoted $\ML(X)$ and $\ML^s(X)$ stands for its $\ts$-closure in
$\Lp(X)$.

\section{equivalence relations on $\Lp(X)$}\label{S:equi}

We define on $\Li\pr(X)$ an equivalence relation: we say that $A\underset{1}\sim B$ if there is a finite sequence
$\{F_i \mid 0\leq i\leq n\}$ in $\Lp(X)$ with $F_0 = A$ and $F_n = B$ such that $F_i\cap F_{i+1} \neq \emptyset$, \
$0\leq i\leq n-1$. Let now $\Cs(\Lp(X))$ be the algebra of all $\mathbb{C}$-valued bounded $\ts$-continuous
functions on $\Lp(X)$ that are constant on the equivalence classes with respect to $\underset{1}\sim$. For $A,B\in
\Lp(X)$ we shall say that $A\underset{2}\sim B$ if $f(A) = f(B)$ for every $f\in \Cs(\Lp(X))$. Then
$\underset{2}\sim$ is an equivalence relation on $\Lp(X)$ and by definition $A\underset{1}\sim B$ implies
$A\underset{2}\sim B$. On $Q(X) := \Lp(X)/\underset{2}\sim$ we shall consider two topologies: the quotient topology,
$\tau_Q$, defined by the quotient map $Q : \Lp(X)\To Q(X)$ when $\Lp(X)$ is endowed with the $\ts$ topology and the
completely regular topology $\tau_{CR}$ given by the functions of $\Cs(\Lp(X))$ considered as functions on $Q(X)$.
Obviously $\tau_{CR}\subset \tau_Q$; it will follow from subsequent results that the question of equality between
these two topologies parallels the situation between $(\gamma(X),\tau_{cr})$ and $(\gamma(X),\tau_q)$.

Let $f\in C^b(X)$; then $f$ is constant on every closed limit subset of $X$. Define $f^{\Li}$ on $\Li\pr(X)$ by
$f^{\Li}(S) = f(x)$ where $x$ is any point of $S\in \Li\pr(X)$. Then $f^{\Li}$ is $\tw$-continuous of $\Li\pr(X)$,
thus $f^{\Li}\in \Cs(\Lp(X))$. Indeed, if $D$ is an open subset of $\mathbb{C}$ then $U := \{x\in X \mid f(x)\in
D\}$ is open hence $\{S\in \Li\pr(X) \mid f^{\Li}(S)\in D\} = \{S\in~ \Li\pr(X) \mid S\cap U \neq \emptyset \}$ is
in $\tw$.

We have a converse to the statement about the continuity of $f^{\Li}$ but first we need a lemma about
$\ts$-convergence in $\Lp(X)$. It is included in \cite[Lemma H.2]{W} but we give below its simple proof for the sake
of self sufficiency.

\begin{lem} \label{L:convergence}

   Let $\{S_{\alpha} \mid \alpha\in \mathcal{A}\}$ be a net in $\Lp(X)$ that $\tau_s$-converges to $S\in \Li(X)$. If
   $x_{\alpha}\in S_{\alpha}$ for $\alpha\in \mathcal{A}$ and $\{x_{\alpha}\}$ converges to $x\in X$ then $x\in S$.

\end{lem}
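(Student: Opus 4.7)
The plan is to argue by contradiction using only the definition of the Fell topology basic open sets together with the local compactness of $X$.

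First I would suppose that $x\notin S$. Since $S$ is closed (being an element of $\Li(X)\subset \F(X)$) and $X$ is locally compact, I can choose a compact neighbourhood $C$ of $x$ whose interior misses $S$; in fact I can shrink it so that $C\cap S = \emptyset$.

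Next I would observe that the basic $\ts$-open set $\mathcal{U}(C,\emset)$ (that is, with empty finite family $\Phi$ of open sets, so the second condition in its definition becomes vacuous) consists exactly of those closed subsets of $X$ that avoid $C$, and therefore contains $S$. By $\ts$-convergence of the net, $S_\alpha$ lies eventually in $\mathcal{U}(C,\emset)$, so $S_\alpha \cap C = \emset$ eventually. On the other hand, since $C$ is a neighbourhood of $x$ and $x_\alpha \to x$, the points $x_\alpha$ lie eventually in $C$. This contradicts $x_\alpha \in S_\alpha$, and the contradiction forces $x\in S$.

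There is essentially no obstacle here: the whole argument is the one line observation that, for any compact neighbourhood $C$ of $x$ disjoint from $S$, the sets $\{T\in \F(X)\mid T\cap C=\emset\}$ are $\ts$-open and separate $S$ from any closed set meeting $C$. The only thing worth flagging is that one must allow $\Phi=\emset$ in the definition of $\mathcal{U}(C,\Phi)$, which is standard for a base of the Fell topology.
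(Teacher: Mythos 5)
Your proposal is correct and is essentially identical to the paper's own proof: both assume $x\notin S$, take a compact neighbourhood of $x$ disjoint from $S$ (using local compactness and the closedness of $S$), note that $S_\alpha$ eventually misses it by $\tau_s$-convergence, and derive a contradiction with $x_\alpha\to x$. Your extra remark about allowing $\Phi=\emptyset$ in $\mathcal{U}(C,\Phi)$ is a fair point of care but does not change the argument.
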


\begin{proof}

   Assuming that $x\notin S$ we let $K$ be a compact neighbourhood of $x$ disjoint from $S$. Thus $S\in \{T\in
   \Li(X) \mid T\cap K = \emptyset\}$, hence eventually $S_{\alpha}\cap K = \emptyset$ and $\set{x_{\alpha}}$
   cannot converge to $x$, a contradiction.

\end{proof}

\begin{thm} \label{T:iso}

   The map $f\To f^{\Li}$ is an isomorphism of $C^{b}(X)$ onto $\Cs(\Lp(X))$.

\end{thm}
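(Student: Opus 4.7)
Well-definedness of $f \mapsto f^{\Li}$ as a map into $\Cs(\Lp(X))$ was already shown in the paragraph preceding Lemma~\ref{L:convergence}, and the formula $f^{\Li}(S) = f(x)$ for $x\in S$ makes it clear that the map is an algebra homomorphism. So what remains is bijectivity.

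Injectivity is immediate: for every $x\in X$ the constant net at $x$ converges to every point of $\overline{\{x\}}$, hence $\overline{\{x\}}\in \Lp(X)$, and $f^{\Li}(\overline{\{x\}}) = f(x)$ recovers $f$.

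For surjectivity, given $F\in \Cs(\Lp(X))$ I would set $f(x) := F(\overline{\{x\}})$. Boundedness of $f$ is clear from $\abs{f(x)}\leq \norm{F}$. Granting continuity of $f$ for the moment, the identity $f^{\Li} = F$ follows by a short $\underset{1}\sim$-invariance argument: for any $S\in \Lp(X)$ and any $x\in S$ one has $\overline{\{x\}}\subset S$, so $\overline{\{x\}}\cap S \ni x$ and thus $F(S) = F(\overline{\{x\}}) = f(x) = f^{\Li}(S)$.

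The principal obstacle is the continuity of $f$, since $\eta_X$ is only $\tw$-continuous so the factorisation $f = F\circ \eta_X$ does not by itself suffice. I would argue via the subnet criterion in the Hausdorff target $\mathbb{C}$. Given $x_\alpha\to x_0$ in $X$ and any subnet $\set{x_{\alpha_\beta}}$, pass to a further subnet along which $\overline{\set{x_{\alpha_\beta}}}$ $\ts$-converges in the compact space $(\F(X),\ts)$ to some $T\in \F(X)$. Since $(\Li(X),\ts)$ is compact and $(\F(X),\ts)$ is Hausdorff, $\Li(X)$ is $\ts$-closed in $\F(X)$, so $T\in \Li(X)$. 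Applying Lemma~\ref{L:convergence} to $x_{\alpha_\beta}\in \overline{\set{x_{\alpha_\beta}}}$ shows $x_0\in T$; in particular $T\in \Lp(X)$, and $x_0\in T\cap \overline{\set{x_0}}$ yields $T \underset{1}\sim \overline{\set{x_0}}$. Hence $F(T) = F(\overline{\set{x_0}}) = f(x_0)$, and the $\ts$-continuity of $F$ delivers $f(x_{\alpha_\beta}) = F(\overline{\set{x_{\alpha_\beta}}}) \to F(T) = f(x_0)$, which by the subnet criterion completes the proof that $f$ is continuous.
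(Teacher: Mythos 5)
Your proof is correct and follows essentially the same route as the paper: the heart of both arguments is the continuity of $f$, established by extracting a $\ts$-convergent subnet of closed limit sets containing the $x_\alpha$ (you take $\overline{\set{x_\alpha}}$, the paper takes arbitrary $S_\alpha\ni x_\alpha$), invoking Lemma~\ref{L:convergence} to place $x_0$ in the limit set, and then using the $\ts$-continuity of the given function on $\Lp(X)$. Your explicit verification of injectivity and of $f^{\Li}=F$ via $\underset{1}\sim$-invariance only spells out steps the paper leaves to the reader.
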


\begin{proof}

   We have to prove only the surjectivity of the map. Let $g\in \Cs(\Lp(X))$ and define $f$ on $X$ by $f(x) =
   g(S)$, $S$ being any element in $\Lp(X)$ such that $x\in S$, for instance the closure of $\set{x}$. Then $f$ is
   well defined and we are going to show that it is continuous. Once this will be done we clearly shall have $f^{\Li} = g$
   and the proof will be finished. Let $D$ be an open subset of $\mathbb{C}$ and $x\in f^{-1}(D)$. We
   claim that there is a neighbourhood of $x$ contained in $f^{-1}(D)$. If not then there is a net $\set{x_{\alpha}
   \mid \alpha\in \mathcal{A}}$ that converges to $x$ but $f(x_{\alpha})\notin D$ for every $\alpha\in \mathcal{A}$.
   We choose $S_{\alpha}\in \Lp(X)$ such that $x_{\alpha}\in S_{\alpha}$ so that $f(x_{\alpha}) =
   g(S_{\alpha})$ for each $\alpha\in \mathcal{A}$. The net $\set{S_{\alpha}}$ has a subnet $\set{S_{\alpha\pr}}$ that $\ts$-converges to some $S\in
   \Li(X)$. By Lemma \ref{L:convergence} $x\in S$ hence $S\neq \emptyset$ and $g(S) = f(x)\in D$. The continuity of $g$
   implies that eventually
   $f(x_{\alpha\pr}) = g(S_{\alpha\pr})\in D$ contradicting our choice of the net $\set{x_{\alpha}}$.

\end{proof}

\begin{rem}

   It follows from the $\tw$-continuity of $\eta_X$, the definition of $f^{\Li}$ for $f\in C^b(X)$ and the preceding
   proof that in the definition of $\Cs(\Lp(X))$ we can substitute $\tw$-continuity for $\ts$-continuity.

\end{rem}

We can now define a one-to-one map $\chi$ from $\gamma(X)$ onto $Q(X)$ as follows: for $x\in X$ we let $\chi(q(x))
:= Q(S)$ where $S$ is any closed limit set that contains $x$. It readily follows from Theorem \ref{T:iso} that the
map is well defined and it has the stated properties. As a direct consequence of the definitions we have for every
$g\in C^b(\gamma(X))$ that $(g\circ q)^{\Li} = g\circ \chi^{-1}\circ Q$. It is clear that $\chi$ is a homeomorphism
of $(\gamma(X),\tau_{cr})$ onto $(Q(X),\tau_{CR})$.

\begin{prop}\label{P:quot}

   The map $\chi$ defined above is a homeomorphism from $(\gamma(X),\tau_q)$ onto $(Q(X),\tau_Q)$.

\end{prop}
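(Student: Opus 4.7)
The plan is to prove that both $\chi\colon (\gamma(X),\tau_q)\to (Q(X),\tau_Q)$ and its inverse are continuous; since $\chi$ is already a bijection, this yields the homeomorphism. My strategy is to exploit the universal properties of the quotient maps $q$ and $Q$ together with the commuting square $\chi\circ q = Q\circ \eta_X$, reducing each direction to the continuity of one auxiliary map.

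For continuity of $\chi^{-1}$ (equivalently, openness of $\chi$), introduce $\psi\colon \Lp(X)\to \gamma(X)$ by $\psi(S):=q(x)$ for any $x\in S$. This is well defined since $q$ is constant on every $S\in \Lp(X)$, and it factors as $\psi=\chi^{-1}\circ Q$. For a $\tau_q$-open $U\subset \gamma(X)$, the set $q^{-1}(U)$ is open in $X$ and saturated under $\sim$; because every $S\in \Lp(X)$ is contained in a single $\sim$-class,
\[
 \psi^{-1}(U) = \{S\in \Lp(X) \mid S\subset q^{-1}(U)\} = \{S\in \Lp(X) \mid S\cap q^{-1}(U)\neq \emset\} = \mathcal{U}(\emset,\{q^{-1}(U)\}),
\]
which is $\tw$-open, hence $\ts$-open. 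So $\psi$ is $\ts$-continuous, and the universal property of the quotient $Q$ then yields continuity of $\chi^{-1}$.

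Continuity of $\chi$ itself is the main obstacle. By the universal property of $q$ it suffices to prove that $Q\circ \eta_X\colon X\to (Q(X),\tau_Q)$ is continuous, but one cannot simply compose the available continuities: $\eta_X$ is only $\tw$-continuous, while $\tau_Q$ is the quotient of the strictly finer topology $\ts$. I plan to argue by contradiction. Let $V$ be $\tau_Q$-open and set $W:=Q^{-1}(V)$, so that $W$ is $\ts$-open and saturated under $\underset{2}\sim$; suppose some $x_0$ with $\overline{\{x_0\}}\in W$ is not an interior point of $\eta_X^{-1}(W)$. Then there is a net $x_\alpha\to x_0$ in $X$ with $\overline{\{x_\alpha\}}\notin W$ for every $\alpha$. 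The $\ts$-compactness of $\Li(X)$ yields a subnet along which $\overline{\{x_\beta\}}$ $\ts$-converges to some $T\in \Li(X)$; Lemma \ref{L:convergence}, applied to $x_\beta\in \overline{\{x_\beta\}}$ with $x_\beta\to x_0$, forces $x_0\in T$, so $T\in \Lp(X)$. Since $T$ and $\overline{\{x_0\}}$ share the point $x_0$, Theorem \ref{T:iso} gives $f^{\Li}(T) = f(x_0) = f^{\Li}(\overline{\{x_0\}})$ for every $f\in C^b(X)$, hence $T\underset{2}\sim \overline{\{x_0\}}$; the saturation of $W$ then puts $T\in W$. The $\ts$-openness of $W$ at $T$ combined with $\overline{\{x_\beta\}}\to T$ in $\ts$ forces $\overline{\{x_\beta\}}\in W$ eventually, contradicting the choice of the net. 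Thus $\eta_X^{-1}(W)$ is open in $X$, so $Q\circ \eta_X$ is continuous and with it $\chi$.
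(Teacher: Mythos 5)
Your proof is correct and follows essentially the same route as the paper: the hard direction uses $\ts$-compactness of $\Li(X)$ to extract a convergent subnet, Lemma \ref{L:convergence} to place the limit point in the limit set, and the $\underset{2}\sim$-saturation of $Q^{-1}(V)$ to reach the contradiction, while the easy direction is the same identification of $Q^{-1}(\chi(V))$ with a basic $\tw$-open set. The only differences (phrasing via universal properties of $q$ and $Q$, and using $\overline{\{x_\alpha\}}$ in place of arbitrary closed limit sets containing $x_\alpha$) are cosmetic.
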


\begin{proof}
Let $O\subset Q(X)$ be open in the quotient topology. Then $U := q^{-1}(\chi^{-1}(O))$ is the union of all the
elements of $Q^{-1}(O)$ and we claim that $U$ is open. Otherwise, there are $x\in U$ and a net $\set{x_{\alpha}}$ in
$X\setminus U$ that converges to $x$. For each index $\alpha$ we choose $S_{\alpha}$ such that $x_{\alpha}\in
S_{\alpha}$. This compels each $S_{\alpha}$ to belong to the closed subset $\Lp(X)\setminus Q^{-1}(O)$ of $\Lp(X)$.
By passing to a subnet if necessary we may suppose that $\set{S_{\alpha}}$ $\ts$-converges to some $S\in \Li(X)$.
Lemma \ref{L:convergence} yields $x\in S$ thus $S\in Q^{-1}(O)$. On the other hand, $S\notin Q^{-1}(O)$ as the
$\ts$-limit of $S_{\alpha}$, a contradiction. Therefore $U$ is an open subset of $X$, $\chi^{-1}(O)$ is a
$\tau_q$-open subset of $\gamma(X)$ and the continuity of $\chi$ is established.

Suppose now that $V\subset \gamma(X)$ is $\tau_q$-open. Then

\[
 Q^{-1}(\chi(V)) = \set{S\in \Lp(X) \mid S\subset q^{-1}(V)} = \set{S\in \Lp(X) \mid S\cap q^{-1}(V)\neq \emptyset}
\]

is $\tau_w$-open hence $\tau_s$-open. Thus $\chi(V)$ is $\tau_Q$-open.
\end{proof}

Before stating the main result of this section we need a lemma.

\begin{lem}\label{L:sigma}

   If $X$ is $\sigma$-compact then $(\Lp(X),\tau_s)$ is $\sigma$-compact too.

\end{lem}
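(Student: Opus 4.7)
The plan is to exhibit $\Lp(X)$ as a countable union of $\tau_s$-compact subsets by exploiting the fact that $(\F(X),\ts)$ is already compact Hausdorff, so every $\ts$-closed subset of $\F(X)$ is automatically $\ts$-compact.

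First, I would use $\sigma$-compactness to write $X = \bigcup_{n=1}^{\infty} K_n$ with each $K_n$ compact. For each $n$ set
\[
L_n := \{S \in \Li(X) \mid S \cap K_n \neq \emptyset\}.
\]
Every nonempty closed set $S \in \Lp(X)$ meets some $K_n$, so $\Lp(X) = \bigcup_{n=1}^{\infty} L_n$. The task is therefore to show that each $L_n$ is $\ts$-compact.

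The key observation is that $L_n$ is $\ts$-closed in $\F(X)$. Indeed, $L_n$ is the intersection of $\Li(X)$, which is $\ts$-closed in $\F(X)$ because $(\Li(X),\ts)$ is compact Hausdorff inside the Hausdorff space $(\F(X),\ts)$, with the set $\{S \in \F(X) \mid S \cap K_n \neq \emptyset\}$, whose complement is precisely the basic $\ts$-open set $\mathcal{U}(K_n,\emptyset)$ of the Fell topology. Since $(\F(X),\ts)$ is compact, each $L_n$ is $\ts$-compact, and the $L_n$ are contained in $\Lp(X)$, so this compactness passes to the relative topology on $\Lp(X)$.

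I do not anticipate a real obstacle here; the whole argument rests on the two basic facts that $\F(X)$ is $\ts$-compact and that sets of the form $\{S \mid S \cap K = \emptyset\}$ with $K$ compact are in the defining base of $\ts$. The only small care needed is to note that $\Li(X)$ is $\ts$-closed in $\F(X)$ (equivalently, to work with $L_n$ as a closed subset of the ambient compact space rather than directly inside $\Lp(X)$, which is itself only locally compact).
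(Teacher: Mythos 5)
Your proof is correct, and while it uses the same countable decomposition as the paper --- writing $X=\bigcup_n K_n$ and covering $\Lp(X)$ by the sets $L_n$ of closed limit sets meeting $K_n$ --- it establishes the compactness of each $L_n$ by a genuinely different and arguably cleaner route. The paper argues with nets: it takes a net in $L_n$, passes to a $\ts$-convergent subnet with limit $S\in\Li(X)$, picks points $x_\alpha\in S_\alpha\cap K_n$ converging (after a further subnet) to some $x\in K_n$, and invokes Lemma \ref{L:convergence} to conclude $x\in S$, hence $S\in L_n$. You instead observe that $L_n=\Li(X)\cap\{S\in\F(X)\mid S\cap K_n\neq\emset\}$ is $\ts$-closed in the compact space $\F(X)$: the set $\Li(X)$ is closed because it is compact in the Hausdorff space $(\F(X),\ts)$, and the second set is the complement of the basic open set $\mathcal{U}(K_n,\emset)$. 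This is purely formal, needs no net argument and no appeal to Lemma \ref{L:convergence}, and rests only on the definition of the Fell topology together with the compactness of $\F(X)$ and of $\Li(X)$. The paper's version has the minor virtue of reusing Lemma \ref{L:convergence}, which is needed elsewhere anyway, and of not invoking the closedness of $\Li(X)$ in $\F(X)$ explicitly; yours is shorter and makes the topological reason for compactness transparent. Both are complete proofs.
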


\begin{proof}

   Let $X = \cup_{n=1}^{\infty} K_n$ where each $K_n$ is compact and set $L_n = \{S\in \Lp(X) \mid S\cap K_n \neq
   \emset \}$. Clearly $\Lp(X) = \cup_{n=1}^{\infty} L_n$ and we are going to show that each $L_n$ is $\tau_s$-compact. Let
   $\set{S_{\alpha}}$ be a net in $L_n$ and without loss of generality we shall suppose that it $\tau_s$-converges to some
   $S\in \Li(X)$. For each $\alpha$ choose $x_{\alpha}\in S_{\alpha}\cap K_n$. By passing to a subnet we may suppose
   that $\set{x_{\alpha}}$ converges to some $x\in K_n$. Lemma \ref{L:convergence} yields $x\in S$ thus $S\in L_n$.

\end{proof}

\begin{thm} \label{T:equal}

  If $X$ is $\sigma$-compact then $\tau_{cr} = \tau_q$ and $\gamma(X)$ is paracompact.

\end{thm}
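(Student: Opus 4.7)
The plan is to reduce, via Proposition \ref{P:quot} together with the identification $\chi$, the statement $\tau_{cr} = \tau_q$ on $\gamma(X)$ to the equivalent statement $\tau_{CR} = \tau_Q$ on $Q(X)$: indeed $\chi$ is simultaneously a $\tau_q$-$\tau_Q$ and a $\tau_{cr}$-$\tau_{CR}$ homeomorphism. The workspace $(\Lp(X), \tau_s)$ is Hausdorff and locally compact (by the introduction) and, by Lemma \ref{L:sigma}, $\sigma$-compact, hence Lindel\"of, paracompact, and normal.

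Since $\tau_{CR} \subset \tau_Q$ always, and both topologies share the same algebra of bounded continuous functions, namely $\Cs(\Lp(X))$, and $\tau_{CR}$ is by definition the weak topology generated by this algebra, the equality $\tau_Q = \tau_{CR}$ will follow once I show that $(Q(X), \tau_Q)$ is completely regular. Hausdorffness of $(Q(X), \tau_Q)$ is automatic from $\tau_{CR} \subset \tau_Q$, so the content is the separation of points from closed sets by continuous functions, which amounts to producing, for each $\tau_s$-closed $\underset{2}\sim$-saturated $F \subset \Lp(X)$ and each $S_0 \notin F$, a function $h \in \Cs(\Lp(X))$ with $h(S_0) = 1$ and $h \equiv 0$ on $F$.

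The main obstacle is that Urysohn on the normal space $\Lp(X)$ readily furnishes a $\tau_s$-continuous separating function, but nothing ensures it is constant on the $\underset{1}\sim$-classes, as required for membership in $\Cs(\Lp(X))$. I would attack this using three ingredients: (i) the $\sigma$-compact exhaustion $\Lp(X) = \bigcup_{n} L_n$ with $L_n = \set{S \in \Lp(X) \mid S \cap K_n \neq \emset}$ compact from the proof of Lemma \ref{L:sigma}; (ii) the known compact case, where $\tau_q = \tau_{cr}$ holds by \cite{AS}, yielding class-constant separating functions piecewise; and (iii) paracompactness of $\Lp(X)$ to patch the local constructions, via a subordinate partition-of-unity or telescoping argument, into a global $h \in \Cs(\Lp(X))$. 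Reconciling class-constancy across the patches is the technical heart of the argument; it is precisely here that $\sigma$-compactness must be used, as Somerset's counterexample in the appendix shows the equality can fail without it.

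Once $\tau_{cr} = \tau_q$ is established, the paracompactness of $\gamma(X)$ is immediate: $(\gamma(X), \tau_{cr})$ is completely regular by construction and $\sigma$-compact as a continuous image of the $\sigma$-compact space $X$; any regular Lindel\"of space is paracompact.
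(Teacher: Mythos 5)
Your reduction to $Q(X)$ via Proposition \ref{P:quot}, the use of Lemma \ref{L:sigma} to get that $(\Lp(X),\ts)$ is locally compact Hausdorff and Lindel\"of, and the observation that $\tau_{CR}=\tau_Q$ follows once $(Q(X),\tau_Q)$ is shown to be completely regular, all match the paper exactly. But at the decisive point you do not prove complete regularity: you describe a plan (exhaust by the compacta $L_n$, invoke the compact case from \cite{AS}, patch with a partition of unity) and then concede that ``reconciling class-constancy across the patches is the technical heart of the argument.'' That heart is missing, and the plan as stated faces genuine obstructions. A partition of unity subordinate to a cover of $\Lp(X)$ consists of $\ts$-continuous functions with no reason to be constant on $\underset{1}\sim$-classes, and multiplying a class-constant function by a non-class-constant cutoff destroys membership in $\Cs(\Lp(X))$. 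Moreover the compact case of \cite{AS} concerns $\gamma(X)$ for $X$ compact; it does not apply to the compact subsets $L_n$ of $\Lp(X)$, because the relation $\underset{1}\sim$ restricted to $L_n$ is not intrinsic to $L_n$: the finite chains $F_0,\dots,F_n$ witnessing equivalence may leave $L_n$, so the saturation of a set inside $L_n$ is controlled by data outside $L_n$. Nothing in your sketch controls this.

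The paper closes exactly this gap with a single citation: by Morita's theorem \cite[Theorem 1]{Mo}, any Hausdorff decomposition space of a locally compact Lindel\"of (equivalently, $\sigma$-compact locally compact) space is paracompact. Since $(Q(X),\tau_Q)$ is Hausdorff (the functions in $\Cs(\Lp(X))$ separate its points, as you note), Morita gives paracompactness, hence normality, hence complete regularity of $\tau_Q$, and the equality $\tau_{CR}=\tau_Q$ and the paracompactness assertion both drop out at once. Your final paragraph (regular $+$ Lindel\"of $\Rightarrow$ paracompact for $\gamma(X)$) is correct but is downstream of the equality you have not established. To repair the proof, replace your patching scheme by an appeal to Morita's theorem, or else supply an actual construction of class-constant separating functions --- which would amount to reproving that theorem in this special case.
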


\begin{proof}

   In view of Proposition \ref{P:quot} and the remarks preceding its statement it will suffice to show that
   $\tau_{CR} = \tau_Q$ and that $Q(X)$ is paracompact. Now, $\Lp(X)$ with the $\tau_s$ topology is locally compact
   Hausdorff and $\sigma$-compact by
   Lemma \ref{L:sigma} hence Lindel\"{o}f. Its quotient space $(Q(X),\tau_Q)$ is Hausdorff since the real
   valued bounded continuous functions on $Q(X)$ separate its points. It follows from Theorem 1 of \cite{Mo} that
   $Q(X)$ with its $\tau_Q$ topology is a paracompact space. In particular it is also completely regular, hence
   $\tau_{CR} = \tau_Q$ and we are done.

\end{proof}

\begin{rem}

   In all of the above we could have used the space $\ML^s(X)$ instead of $\Lp(X)$.

\end{rem}

We are going to treat another situation when the two topologies on $\gamma(X)$ coincide but first we have to
introduce a new relation on the space $X$ that was considered in \cite{AS} for the primitive ideal space of a
$C^*$-algebra. For $x,y\in X$ we shall write $x\underset{H}\sim y$ if $x$ and $y$ cannot be separated by disjoint
open subsets of $X$. This is the same as saying that there is a closed limit subset of $X$ to which both $x$ and $y$
belong. In general this is not a transitive relation. Clearly, if $\underset{H}\sim$ is an equivalence relation then
each equivalence class for it is the union of all the elements in an equivalence class with respect to
$\underset{1}\sim$ on $\Lp(X)$ and each such union is an equivalence class for $\underset{H}\sim$. The following
result is the same as Proposition 3.2 of \cite{AS} when $X$ is the primitive ideal space of a $C^*$-algebra but the
proof below differs in part from that given there.

\begin{prop} \label{P:open}

   Suppose $\underset{H}\sim$ is an open equivalence relation. Then each equivalence class is a maximal limit set and
   each maximal limit set of $X$ is an equivalence class for $\underset{H}\sim$. The relations $\sim$ and
   $\underset{H}\sim$ are the same, $\tau_q = \tau_{cr}$, the quotient map $q$ is open and
   $\gamma(X)$ is a locally compact Hausdorff space.

\end{prop}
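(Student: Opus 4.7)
The plan is to proceed in three moves.

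First, I would verify that the graph $R = \{(x,y) : x \underset{H}\sim y\}$ is closed in $X \times X$: if $(x_\alpha, y_\alpha) \to (x,y)$ with $x_\alpha \underset{H}\sim y_\alpha$, then for any open $U \ni x$ and any open $V \ni y$ the net is eventually in $U \times V$, and $x_\alpha \underset{H}\sim y_\alpha$ forces $U \cap V \neq \emptyset$; so $x \underset{H}\sim y$. Combined with the hypothesis that $\underset{H}\sim$ is an open equivalence (so the quotient map $q_H : X \to X/\underset{H}\sim$ is open), the classical fact that an open equivalence with closed graph has a Hausdorff quotient gives that $X/\underset{H}\sim$ is Hausdorff. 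Since $q_H$ is a continuous open surjection from the locally compact $X$ onto a Hausdorff space, the image of a compact neighbourhood of $x$ is a compact neighbourhood of $q_H(x)$, so $X/\underset{H}\sim$ is locally compact Hausdorff, hence completely regular; its bounded continuous functions therefore separate points. Pulling back along $q_H$ yields $\sim\, \subseteq\, \underset{H}\sim$, while the reverse inclusion is immediate since functions in $C^b(X)$ are constant on every closed limit set. Hence $\sim\, =\, \underset{H}\sim$, $\gamma(X) = X/\underset{H}\sim$ is locally compact Hausdorff, $q$ is open, and the result of \cite{AS} recalled in the introduction gives $\tau_q = \tau_{cr}$.

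Second, each equivalence class $[x]_H$ is closed as the preimage of a point in the Hausdorff quotient. Its maximality among closed limit sets is immediate: any closed limit set $F \supseteq [x]_H$ consists of pairwise $\underset{H}\sim$-related points (they share the common closed limit set $F$), so $F \subseteq [x]_H$ and $F = [x]_H$. Conversely, for any maximal closed limit set $M$ and any $x \in M$ the same argument gives $M \subseteq [x]_H$, and maximality of $M$ then forces $M = [x]_H$ provided $[x]_H$ itself is a limit set.

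The main obstacle is this last point, namely showing $[x]_H$ is a limit set. By the remarks preceding the proposition, $[x]_H$ equals the chain-union $\bigcup \{F \in \Lp(X) : F \underset{1}\sim F_0\}$ for any $F_0 \in \Lp(X)$ containing $x$, and one must supply a net in $X$ converging to every point of this union. The openness of $\underset{H}\sim$ is essential here: given $p_1, \dots, p_n \in [x]_H$ and open $U_i \ni p_i$, the saturations $R(U_i)$ are open neighbourhoods of the entire class $[x]_H$, and combining this with the pairwise intersection $U_i \cap U_j \neq \emptyset$ (from $p_i \underset{H}\sim p_j$) should produce a point of $\bigcap_i U_i$. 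Once this finite-intersection property is established, a directed-system argument over finite subsets of $[x]_H$ paired with neighbourhood choices provides the required net.
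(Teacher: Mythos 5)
Your first and second moves are sound and reach the paper's conclusions by a mildly different but equivalent route: where you invoke the closed-graph-plus-open-relation criterion for Hausdorffness of $X/\underset{H}\sim$, the paper argues directly that disjoint open neighbourhoods $V_1,V_2$ of inequivalent points have disjoint images under the open quotient map, since no point of $V_1$ can be $\underset{H}\sim$-related to a point of $V_2$ precisely because $V_1\cap V_2=\emset$. Your identification $\sim\,=\,\underset{H}\sim$ via complete regularity of the quotient together with the constancy of $C^b(X)$ on closed limit sets is exactly the paper's argument, and deducing $\tau_q=\tau_{cr}$ from openness of $q$ (or, as the paper does, from the fact that both topologies are completely regular with the same bounded continuous functions) is legitimate.

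The genuine gap is the step you yourself flag: proving that an $\underset{H}\sim$-class $S$ has the finite intersection property, i.e.\ that for $p_1,\dots,p_n\in S$ and open $U_i\ni p_i$ one has $\bigcap_{i=1}^n U_i\neq\emset$. Your sketch offers two ingredients --- the pairwise intersections $U_i\cap U_j\neq\emset$ and the fact that each open saturation $R(U_i)$ contains all of $S$ --- and says they ``should produce'' a common point, but neither ingredient nor their juxtaposition does this: pairwise nonempty intersection of open sets does not imply nonempty total intersection, and a point of $\bigcap_i R(U_i)$ (which certainly exists, e.g.\ any point of $S$) is related to \emph{some} point of each $U_i$ without those witnesses having anything in common. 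The paper closes this by induction on $n$: let $U$ be the open saturation of $U_n$; since each $p_i\in U$, the sets $U_i\cap U$ for $1\leq i\leq n-1$ are neighbourhoods of $p_1,\dots,p_{n-1}$, so by the induction hypothesis $W:=\bigcap_{i=1}^{n-1}(U_i\cap U)$ is a nonempty open set; any $x\in W\subset U$ is $\underset{H}\sim$-related to some $y\in U_n$, and since $W$ and $U_n$ are open sets containing the non-separated pair $x,y$ they must meet, whence $\bigcap_{i=1}^n U_i\supset W\cap U_n\neq\emset$. One then invokes \cite[Lemme 9]{D} to convert the finite intersection property into a net converging to every point of $S$ (your directed-system construction is essentially the proof of that lemma). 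Without this induction the maximality statements in your second move, and with them the proposition, remain unproved; this is precisely the part of the proof that the paper singles out as differing from \cite{AS}.
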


\begin{proof}

   Let $S$ be an equivalence class for $\underset{H}\sim$. We are going to show that for every finite set $\set{x_i
   \mid 1\leq i\leq n}\subset S$ and every neighbourhood $V_i$ of $x_i$, $1\leq i\leq n$, we have $\cap_{i=1}^n
   V_i\neq \emset$; by \cite[Lemme 9]{D} this will imply that $S$ is a limit set. The claim is obviously valid for
   any pair of points of $S$. Suppose that it is true for any subset of $n-1$ points of $S$ and let $x_i\in S$ with
   an arbitrary neighbourhood $V_i$, $1\leq i\leq n$. Let $U$ be the open saturation of $V_n$ for
   $\underset{H}\sim$. Then $x_i\in U$ and $U_i := V_i\cap U$ is a neighbourhood of $x_i$, $1\leq i\leq n-1$. By the
   induction hypothesis
   $W := \cap_{i=1}^{n-1}U_i$ is a non-void open set. Choose $x\in W$; there is $y\in V_n$ such that $x\underset{H}\sim y$.
   By the definition of $\underset{H}\sim$ we have $\cap_{i=1}^nV_i\supset W\cap V_n\neq \emset$ and the claim is
   established. Now if $S\pr$ is a limit set with $S\pr\supset S$ then each point of $S\pr$ is
   $\underset{H}\sim$-equivalent to each point of $S$ hence $S\pr = S$. Thus $S$ is a maximal limit set. Since all the points of a
   maximal limit set are $\underset{H}\sim$-equivalent no two different maximal limit sets can intersect and each
   maximal limit set is an equivalence class for $\underset{H}\sim$.

   From here on we follow the proof of \cite[Proposition 3.2]{AS}. If $S_1$ and $S_2$ are two different
   $\underset{H}\sim$-classes and $x_i\in S_i$ then $x_1$ and $x_2$ have two disjoint open neighbourhoods $V_1$ and
   $V_2$, respectively. No point of $V_1$ can be $\underset{H}\sim$-equivalent to any point of $V_2$ because
   $V_1\cap V_2 = \emset$. Hence the open quotient map of $X$ onto $X/\underset{H}\sim$ maps $V_1$ and $V_2$ onto
   two disjoint neighbourhoods of $S_1$ and $S_2$ respectively, which means that the quotient space is Hausdorff. Since the
   quotient map is open the quotient space is also locally compact. Clearly if $x\underset{H}\sim y$ then $x\sim y$. By the
   complete regularity of $X/\underset{H}\sim$ we conclude that if $x$ and $y$ are not
   $\underset{H}\sim$-equivalent they are also not $\sim$-equivalent. Thus $\underset{H}\sim$ and $\sim$ are
   identical. Since $(\gamma(X), \tau_q)$ and $(\gamma(X), \tau_{cr})$ have the same bounded continuous functions
   and both are completely regular, the identity map is a homeomorphism.

\end{proof}

\section{$\CR$-spaces} \label{S:CR}

In this section we shall discuss a class of locally compact second countable spaces. A locally compact second
countable space has a countable base consisting of interiors of compact subsets since the family of the interiors of
all the compact subsets is a base and as such it must contain a countable base by \cite[Problem 1.F]{K}. Thus such a
space $X$ is $\sigma$-compact and by Theorem \ref{T:equal} we have only one topology on $\gamma(X)$ that will be of
interest for us. Of course, this is true also for $Q(X)$.

As we remarked in the previous section, the real valued bounded continuous functions on $\gamma(X)$ separate the
points of this space. It turns out that when $X$ is second countable a countable family of such functions will
suffice.

\begin{prop} \label{P:countable}

   If $X$ is second countable then there is a countable family of real valued bounded continuous functions on
   $\gamma(X)$ that separates the points of $\gamma(X)$.

\end{prop}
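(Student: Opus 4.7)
The plan is to move the problem to $(\Lp(X),\ts)$, where the Fell topology is metrizable, and produce a countable separating family by a Lindel\"{o}f covering argument on $\Lp(X)\times \Lp(X)$. By Theorem~\ref{T:iso} and the homeomorphism $\chi$, countable real valued bounded continuous functions on $\gamma(X)$ separating its points correspond, after taking real and imaginary parts, to countable real valued members of $\Cs(\Lp(X))$ that separate the $\underset{2}\sim$-classes of $\Lp(X)$; it therefore suffices to produce such a family. Second countability of $X$ makes $(\F(X),\ts)$ compact metrizable by \cite[Lemme 2]{D}, so $(\Lp(X),\ts)$ is second countable metrizable and $\Lp(X)\times \Lp(X)$ is hereditarily Lindel\"{o}f.

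The set $R_2:=\set{(A,B) \mid A\underset{2}\sim B}$ is closed in $\Lp(X)\times \Lp(X)$, being the intersection $\bigcap_{f\in \Cs(\Lp(X))} \set{(A,B) \mid f(A)=f(B)}$ of closed sets, so its complement $\Omega$ is open, and hence Lindel\"{o}f. For each $(A,B)\in \Omega$ the definition of $\underset{2}\sim$ yields $\psi\in \Cs(\Lp(X))$, which after taking the real or imaginary part we may assume real valued, with $\psi(A)\neq \psi(B)$. Setting $\eps:=|\psi(A)-\psi(B)|/3$ and letting $U,V$ denote the $\psi$-preimages of the open $\eps$-intervals around $\psi(A),\psi(B)$ respectively, the rectangle $U\times V\ni (A,B)$ is open in $\Lp(X)\times \Lp(X)$ and satisfies $\psi(U)\cap \psi(V)=\emset$, so in particular $U\times V\subseteq \Omega$.

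These rectangles cover $\Omega$, and Lindel\"{o}fness extracts a countable subcover $\set{U_k\times V_k}_{k\in \mathbb{N}}$ with associated real valued functions $\psi_k\in \Cs(\Lp(X))$. Any $(A,B)\in \Omega$ lies in some $U_k\times V_k$, whence $\psi_k(A)\in \psi_k(U_k)$ is disjoint from $\psi_k(B)\in \psi_k(V_k)$, so $\set{\psi_k}$ separates $\underset{2}\sim$-classes. Passing back through the inverse of the isomorphism of Theorem~\ref{T:iso} and descending along $q$ gives the required countable family of real valued bounded continuous functions on $\gamma(X)$ separating its points. The crux of the argument is the Lindel\"{o}f covering step, which is enabled by the metrizability of $(\F(X),\ts)$ when $X$ is second countable; the remaining verifications are routine.
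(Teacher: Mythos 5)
Your argument is correct, and it takes a genuinely different route from the paper. Both proofs begin with the same reduction: pass to $Q(X)$ and look for a countable real valued separating family inside $\Cs(\Lp(X))$, using Theorem~\ref{T:iso} and the homeomorphism $\chi$ (the paper invokes Proposition~\ref{P:quot} for this step). From there the paper exploits $\sigma$-compactness of $(\Lp(X),\ts)$: it exhausts $\Lp(X)$ by $\ts$-compact metrizable sets $K_n$, uses separability of $C(K_n)$ to choose a countable family whose restrictions are sup-norm dense in $\Cs(\Lp(X))\mid_{K_n}$, and then argues that a dense family inherits the separation property on each $Q(K_n)$. You instead run the standard Lindel\"{o}f covering argument on the complement $\Omega$ of the graph of $\underset{2}\sim$ in $\Lp(X)\times \Lp(X)$: each non-equivalent pair admits a separating real valued $\psi\in \Cs(\Lp(X))$ and an open rectangle $U\times V\subseteq \Omega$ on which $\psi$ keeps the two coordinates apart, and second countability of $\Lp(X)\times\Lp(X)$ (from \cite[Lemme 2]{D}) extracts a countable subfamily that still separates. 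Your rectangles are genuinely contained in $\Omega$ since $\psi(U)\cap\psi(V)=\emset$, and every $f\in\Cs(\Lp(X))$ is constant on $\underset{2}\sim$-classes by the very definition of $\underset{2}\sim$, so the chosen $\psi_k$ do descend to $Q(X)$ and then to $\gamma(X)$; all steps check out. The trade-off is mild: your approach avoids the compact exhaustion and the appeal to separability of $C(K)$ plus the density-transfer estimate, using only hereditary Lindel\"{o}fness of a second countable space, whereas the paper's approach produces, as a by-product, countable families that are dense in each restriction algebra $\Cs(\Lp(X))\mid_{K_n}$, which is slightly more information than bare point separation. Both ultimately rest on the same input, namely the metrizability of $(\F(X),\ts)$ for second countable $X$.
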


\begin{proof}

   From Proposition \ref{P:quot} we gather that it will be enough to show that such a countable family of functions
   exists on $Q(X)$. Recall that $(\Lp(X),\tau_s)$ is locally compact Hausdorff and second countable, in particular
   $\sigma$-compact. Let $\set{K_n}$ be an increasing sequence of $\tau_s$-compact subsets that covers $\Lp(X)$. Then
   $C(K_n)$, the algebra of all real continuous functions on the compact metrizable space $K_n$ is separable. Hence
   $\Cs(\Lp(X))\mid {}_{K_n}\subset C(K_n)$ is also separable. Thus there is a countable family $\set{f_n^m \mid
   1\leq m < \infty}\subset \Cs(\Lp(X))$ such that $\set{f_n^m\mid {}_{K_n} \mid 1\leq m < \infty}$ is dense in
   $\Cs(\Lp(X))\mid {}_{K_n}$. Now, if $g$ and $h$ are real bounded continuous functions on $Q(X)$ then

   \[
    sup\set{|g(y) - h(y)|\mid y\in Q(K_n)} = sup\set{|g\circ Q(S) - h\circ Q(S)|\mid S\in K_n}.
   \]

   Hence, viewing the elements of $\Cs(\Lp(X))$ as functions on $Q(X)$, the family $\set{f_n^m \mid 1\leq m <
   \infty}$ separates the points of $Q(K_n)$ and $\set{f_n^m \mid 1\leq m,n < \infty}$ separates the points of
   $Q(X)$.

\end{proof}

\begin{defn}

   A second countable locally compact space $X$ will be called a $\CR$-space if $\gamma(X)$ is a
   second countable locally compact Hausdorff space.

\end{defn}

In \cite{EW98} a separable $C^*$-algebra whose primitive ideal space is a $\CR$-space in the above terminology was
called a $\CR$-algebra. There the conditions imposed on the quotient of the primitive ideal space were for the
topology $\tau_{cr}$. Not every separable $C^*$-algebra is a $\CR$-algebra thus not every second countable locally
compact space is a $\CR$-space. An example is given in \cite[Example 9.2]{DH}. The class of $\CR$-algebras was found
in \cite{EW98} and \cite{EW01} useful for the study of certain $C^*$-dynamical systems and the corresponding crossed
products. It was remarked in \cite{EW01} that for a second countable locally compact space $X$ each of the following
properties is sufficient to ensure that it is a $\CR$-space: $X$ is Hausdorff, $X$ is compact, $\underset{H}\sim$ is
an open equivalence relation on $X$. Of course, this was done in \cite{EW01} only for the primitive ideal space of a
$C^*$-algebra so we shall reproduce and adapt the arguments for the general situation. The case of a Hausdorff space
is trivial. If $X$ is compact then $\gamma(X)$ is compact too. Recall that $\Lp(X)$ is $\ts$-compact. Theorem
\ref{T:iso} and the definition of the quotient topology on $\gamma(X)$ yield an isomorphism of the algebra
$C(\gamma(X))$ into the separable algebra $C(\Lp(X))$ hence $C(\gamma(X))$ and $\gamma(X)$ is second countable.
Alternatively, we can use the Proposition \ref{P:countable} to infer that $\gamma(X)$ is metrizable. Now suppose
that $X$ is a second countable locally compact space for which $\underset{H}\sim$ is an open equivalence relation.
Then by Proposition \ref{P:open}, $\gamma(X)$ is locally compact Hausdorff. Since the quotient map is continuous and
open it is easily seen that $\gamma(X)$ is second countable. We give below a characterization of the $\CR$-spaces.

\begin{thm} \label{T:CR}

   Let $X$ be a second countable locally compact space. The following conditions are equivalent:
   \begin{itemize}
      \item[(i)] $X$ is a $\CR$-space;
      \item[(ii)] $\gamma(X)$ is locally compact;
      \item[(iii)] $\gamma(X)$ is first countable.
   \end{itemize}

\end{thm}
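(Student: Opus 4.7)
The implications (i)$\Rightarrow$(ii) and (i)$\Rightarrow$(iii) are immediate from the definition of $\CR$-space, since a second countable locally compact Hausdorff space is automatically locally compact and first countable.

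For (ii)$\Rightarrow$(i), my plan is to apply Proposition~\ref{P:countable} to obtain a countable family $\set{f_n}$ of real bounded continuous functions on $\gamma(X)$ that separates its points, giving a continuous injection $\phi\colon \gamma(X)\to\Real^\mathbb{N}$ by $\phi(y):=(f_n(y))_n$. For any compact $K\subset\gamma(X)$, $\phi\vert_K$ is a continuous bijection from a compact Hausdorff space onto its Hausdorff image, hence an embedding, so $K$ is metrizable as a subspace of $\Real^\mathbb{N}$. Under (ii), $\gamma(X)$ is locally compact Hausdorff and therefore locally metrizable; since it is also paracompact by Theorem~\ref{T:equal}, Smirnov's metrization theorem yields that $\gamma(X)$ is metrizable. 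The $\sigma$-compactness of $X$ passes to $\gamma(X)$, and a $\sigma$-compact (hence Lindel\"of) metric space is separable, hence second countable. Combined with local compactness this gives (i).

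The subtle direction is (iii)$\Rightarrow$(i). My plan here is to promote first countability to local compactness --- i.e., deduce (ii) from (iii) --- and then invoke the previous implication. Given $y_0\in\gamma(X)$, I would use the homeomorphism $\chi$ of Proposition~\ref{P:quot} to pick a representative $S_0\in Q^{-1}(\chi(y_0))\subset\Lp(X)$ and a compact neighborhood $\overline{U}$ of $S_0$ in the locally compact space $\Lp(X)$. The aim is to show that $Q(\overline{U})$ contains a neighborhood of $y_0$, which would furnish the desired compact neighborhood. Supposing it does not, first countability at $y_0$ yields a sequence $y_k\to y_0$ with lifts $S_k\in Q^{-1}(\chi(y_k))\setminus\overline{U}$; passing to a $\ts$-convergent subsequence $S_{k_j}\to S^*$ in the compact space $(\Li(X),\ts)$, the favourable case is $S^*\in\Lp(X)$, for which the countable separating family forces $Q(S^*)=\chi(y_0)$, and when further $S^*=S_0$ the inclusion of $S_0$ in the interior of $\overline{U}$ produces a contradiction.

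The main obstacle is excluding both the escape case $S^*=\emset$ and the secondary case $S^*\in Q^{-1}(\chi(y_0))\setminus\set{S_0}$, in which the lifted subsequence converges to a different representative of the same equivalence class. Handling these --- presumably by making a judicious choice of $S_0$ using first countability of $\Lp(X)$ at $S_0$ together with a base of relatively compact open sets, or by invoking the flexibility noted in the remark after Theorem~\ref{T:equal} that one may work with $\ML^s(X)$ in place of $\Lp(X)$ --- is where the real work of this implication lies; once this obstruction is overcome and local compactness is established, the case (ii)$\Rightarrow$(i) above completes the argument.
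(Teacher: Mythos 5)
Your treatment of (i)$\Rightarrow$(ii), (i)$\Rightarrow$(iii) and of (ii)$\Rightarrow$(i) is sound. The (ii)$\Rightarrow$(i) argument is essentially the paper's: the paper also uses Proposition~\ref{P:countable} to embed the compact pieces of a $\sigma$-compact exhaustion into $[0,1]^{\aleph_0}$ and concludes second countability directly from the countable cover by metrizable open sets, whereas you route through Smirnov's metrization theorem via paracompactness from Theorem~\ref{T:equal}; both work.

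The genuine gap is in (iii)$\Rightarrow$(i), and it is exactly the obstruction you flag but do not resolve. Your lifting argument starts from an arbitrary representative $S_0\in Q^{-1}(\chi(y_0))$ and a compact neighbourhood $\overline{U}$ of $S_0$ in $\Lp(X)$, hoping that $Q(\overline{U})$ is a neighbourhood of $y_0$. There is no reason for this to hold for a given $S_0$: the fibre $Q^{-1}(\chi(y_0))$ is a closed, in general non-compact, subset of $\Lp(X)$, and a sequence $S_k$ lifting $y_k\to y_0$ is free to $\ts$-accumulate at a different point of that fibre or at $\emptyset$ (which lies in the compact space $(\Li(X),\ts)$ but not in $\Lp(X)$), so the contradiction you want never materializes. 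Choosing $S_0$ ``judiciously'' or replacing $\Lp(X)$ by $\ML^s(X)$ does not obviously repair this, and no argument is supplied. What is really needed here is a theorem about decomposition spaces: the paper observes that, by Proposition~\ref{P:quot}, $Q(X)$ is a first countable Hausdorff quotient of the second countable locally compact Hausdorff (hence metrizable) space $(\Lp(X),\ts)$, and then invokes Theorem~3 of A.~H.~Stone \cite{S} to conclude that $Q(X)$ is locally compact and second countable, transferring back to $\gamma(X)$ via $\chi$. Until you either cite such a result or carry out the nontrivial work of proving the relevant case of it, the implication (iii)$\Rightarrow$(i) --- which is the substantive content of the theorem --- remains unproved.
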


\begin{proof}

   (i)$\Rightarrow$ (ii). This is immediate.

   (ii)$\Rightarrow$ (iii). By assumption $\gamma(X)$ is locally compact Hausdorff and $\sigma$-compact since $X$ is
   $\sigma$-compact. Then, by \cite[Theorem 7.2]{Du}, there is an increasing sequence of open sets $\set{U_n}$ in
   $\gamma(X)$ such that $\gamma(X) = \cup_{n=1} U_n$, $\overline{U}_n$ is compact and $\overline{U}_n\subset
   U_{n+1}$ for every $n$. Proposition \ref{P:countable} yields a sequence $\set{g_k}$ of continuous functions from
   $\gamma(X)$ to the interval $[0,1]$ that separates the points of $\gamma(X)$. The restrictions of the functions
   $\set{g_k}$ to the compact Hausdorff space $\overline{U}_n$ allow us to define a homeomorphism of
   $\overline{U}_n$ into $[0,1]^{\aleph_0}$. Hence $\overline{U}_n$ is metrizable and each open set $U_n$ is
   second countable. We conclude that $\gamma(X)$ is second countable. We actually proved that (i) follows from (ii)
   which is apparently more than we needed.

   (iii)$\Rightarrow$ (i). From Proposition \ref{P:quot} and the hypothesis it follows that $Q(X)$ is first countable. We
   know that it is a Hausdorff space. It has been noted above that $\Lp(X)$ is
   locally compact Hausdorff and under the present hypothesis on $X$ it is also second countable. Thus $Q(X)$ is a Hausdorff
   quotient of a second countable locally compact Hausdorff space. Then \cite[Theorem
   3]{S} implies that $Q(X)$ is locally compact and second countable and the same properties are shared by $\gamma(X)$ by
   Proposition \ref{P:quot}.

\end{proof}

Another characterization of $CR$-spaces can be given in terms of the quotient map $q$. A continuous map $\varphi$
from a topological space $Y$ onto a topological space $Z$ was called in \cite{M68} a bi-quotient map if for every
$z\in Z$ and every open cover of $\varphi^{-1}(z)$ there are finitely many sets $\set{U_i}$ in the cover such that
the interior of $\cup_{i=1} \varphi(U_i)$ is a neighbourhood of $z$. It follows from \cite[Proposition 3.3(d) and
Proposition 3.4]{M68} that whenever $\varphi$ is a quotient map of the second countable locally compact space $Y$
onto the Hausdorff space $Z$ then $Z$ is locally compact and second countable if and only if $\varphi$ is
bi-quotient. If we adapt this general result to our situation we get

\begin{prop}

   The second countable locally compact space $X$ is a $CR$-space if and only if the quotient map $q$ is
   bi-quotient.

\end{prop}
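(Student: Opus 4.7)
The plan is to apply the general result of Michael quoted just before the proposition with $Y = X$, $Z = \gamma(X)$, and $\varphi = q$. To do so I need to verify the hypotheses of that result: (a) $X$ is a second countable locally compact space, (b) $q \colon X \to \gamma(X)$ is a quotient map, and (c) $\gamma(X)$ is Hausdorff. Condition (a) is assumed. Condition (b) is immediate because the topology on $\gamma(X)$ under consideration is, by definition, the quotient topology $\tau_q$; for this to match the completely regular topology $\tau_{cr}$ (which is the one used in the definition of $\CR$-space), I invoke Theorem \ref{T:equal}, noting that a second countable locally compact space is automatically $\sigma$-compact (as observed at the start of Section \ref{S:CR}). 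Condition (c) also follows from this equality: $\tau_{cr}$ is completely regular, hence Hausdorff, so $\gamma(X)$ is Hausdorff.

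With the hypotheses in place, Michael's result gives that $\gamma(X)$ is locally compact and second countable if and only if $q$ is bi-quotient. Now I read off both directions of the proposition: If $X$ is a $\CR$-space, then $\gamma(X)$ is second countable and locally compact (and Hausdorff), so $q$ is bi-quotient. Conversely, if $q$ is bi-quotient, then the cited equivalence forces $\gamma(X)$ to be second countable and locally compact, and combined with Hausdorffness already established, this is exactly the definition of a $\CR$-space.

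Since the whole argument is essentially a direct citation of \cite[Proposition 3.3(d) and Proposition 3.4]{M68} as reformulated in the paragraph preceding the proposition, there is no real obstacle. The only subtle point worth stating carefully is that one must justify using the quotient topology on $\gamma(X)$ rather than $\tau_{cr}$ throughout, and this is precisely where Theorem \ref{T:equal} is used; after that, the proof is a one-line verification.
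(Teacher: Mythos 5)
Your proposal is correct and coincides with the paper's own (essentially implicit) argument: the paper simply applies the quoted consequence of \cite[Proposition 3.3(d) and Proposition 3.4]{M68} to $Y=X$, $Z=\gamma(X)$, $\varphi=q$, relying on Theorem \ref{T:equal} for $\tau_{cr}=\tau_q$ and on the point-separating property of $C^b(X)$ on the quotient for the Hausdorffness of $\gamma(X)$. Your only imprecision is the phrase ``completely regular, hence Hausdorff''; the correct justification is that the bounded continuous functions separate the points of $\gamma(X)$ by construction, which makes $\tau_{cr}$ (and hence $\tau_q$) Hausdorff.
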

\bibliographystyle{amsplain}
\bibliography{}

\begin{thebibliography}{99}

 \bibitem{A}

  R. J. Archbold, \emph{Topologies for primal ideals}, J. London Math. Soc. \textbf{36} (1987), 524--542.

 \bibitem{AS}

  R. J. Archbold and D. W. B. Somerset, \emph{Quasi-standard $C^*$-algebras}, Math. Proc. Cambridge Philos. Soc.
  \textbf{107} (1990), 349--360.

 \bibitem{DH}

  J. D. Dauns and K. H. Hofmann, \emph{Representation of rings by sections}, Mem. Amer. Math. Soc. \textbf{83}
  (1968), 1--180.

 \bibitem{D}

  J. Dixmier, \emph{Sur les espaces localement quasi-compact}, Canad. J. Math. \textbf{20} (1968), 1093--1100.

 \bibitem{Di}

  J. Dixmier, \emph{$C^*$-algebras}, North-Holland, Amsterdam, 1977.

 \bibitem{Du}

  J. Dugundji, \emph{Topology}, Allyn and Bacon, Boston, 1966.

 \bibitem{EW98}

  S. Echterhoff and D. P. Williams, \emph{Crossed products by $C_0(X)$-actions}, J. Funct. Anal. \textbf{158} (1998),
  113--151.

 \bibitem{EW01}

  S. Echterhoff and D. P. Williams, \emph{Locally inner actions on $C_0(X)$-algebras}, J. Operator Theory
  \textbf{45} (2001), 131--160.

 \bibitem{F}

  J. M. G. Fell, \emph{A Hausdorff topology for the closed sets of a locally compact non-Hausdorff space}, Proc.
  Amer. Math. Soc. \textbf{13} (1962), 472--476.

 \bibitem{K}

  J. L. Kelley, \emph{General Topology}, Van Nostrand, Princeton, NJ, 1955.

 \bibitem{L}

  A. J. Lazar, \emph{Hyperspaces of closed limit sets}, to appear.

 \bibitem{M}

  E. Michael, \emph{Topologies on spaces of subsets}, Trans. Amer. Math. Soc. \textbf{71} (1951), 152--182.

\bibitem{M68}

 E. Michael, \emph{Bi-quotient maps and cartesian products of quotient maps}, Ann. Inst. Fourier \textbf{18} (1968),
 287--302.

\bibitem{Mo}

 K. Morita, \emph{On decomposition spaces of locally compact spaces}, Proc. Japan Acad. \textbf{32} (1956),
 544--548.

\bibitem{S}

 A. H. Stone, \emph{Metrisability of decomposition spaces}, Proc. Amer. Math. Soc. \textbf{7} (1956), 690--700.

\bibitem{W}

 D. P. Williams, \emph{Crossed products of $C^*$-algebras}, Mathematical Surveys and Monographs, vol. 134, American
 Mathematical Society, Providence, RI, 2007.

\end{thebibliography}

\end{document}